\newif\ifgr
 \theoremstyle{plain}% default
\newtheorem{theorem}{Theorem}[section]
\newtheorem{lemma}[theorem]{Lemma}
\newtheorem{remark}[theorem]{Remark}
\theoremstyle{definition}
\theoremstyle{remark}
 \newcommand{\op}{\operatorname}
\newcommand{\Hom}{\operatorname{Hom}}
\newcommand{\sch}{\operatorname{sch}}
\newcommand{\Coind}{\operatorname{Coind}}
\newcommand{\thin}{\operatorname{thin}}
\newcommand{\thick}{\operatorname{thick}}
\newcommand{\sgn}{\operatorname{sgn}}
\renewcommand{\gg}{\mathfrak{g}}
\renewcommand{\dim}{\mathrm{dim}}
\newcommand{\supp}{\mathrm{supp}}
\newcommand{\Id}{\mathrm{Id}}
\newcommand{\black}{\color{black}}
\newcommand{\nc}{\newcommand}
\nc{\vareps}{\varepsilon}
\nc{\varesp}{\varepsilon}
\nc{\veps}{\varepsilon}
 \def\fp{\mathfrak{p}}
 \def\<{\langle}
  \def\>{\rangle}
\DeclareMathOperator{\tr}{tr}
\newcommand{\End}{\mathrm{End}}
\begin{document}

\title{Denominator identities for the periplectic Lie superalgebra}

\author{Crystal Hoyt
\and Mee Seong Im    
\and Shifra Reif} 
\address{Crystal Hoyt, Department of Mathematics,  ORT Braude College \& Weizmann Institute, Israel.
{\it E-mail:} \tt{crystal.hoyt@weizmann.ac.il}}
\address{Mee Seong Im, Department of Mathematical Sciences, United States Military Academy, West Point, NY 10996 USA.
{\it E-mail:} \tt{meeseongim@gmail.com}}
\address{Shifra Reif, Department of Mathematics, Bar-Ilan University, Ramat Gan, Israel.
{\it E-mail:} \tt{shifra.reif@biu.ac.il}}
\date{\today}
\thanks{The first author has been partially funded by ORT Braude College's Research Authority. The second author acknowledges Bar-Ilan University at Ramat Gan, Israel for excellent working conditions. The second and third authors are partially funded by ISF Grant No. 1221/17.} 

\begin{abstract}  
We prove denominator identities for the periplectic Lie superalgebra $\mathfrak{p}(n)$, thereby completing the problem of finding denominator identities for all simple classical finite-dimensional Lie superalgebras. \\
\noindent\textbf{Mathematics subject classification (2010):} Primary 
17B20  %	Simple, semisimple, reductive (super)algebras
17B10, % Representations, algebraic theory (weights)
05E10,  % Combinatorial aspects of representation theory
05E05.  % Symmetric functions and generalizations
\\
\noindent\textbf{Key words:} periplectic Lie superalgebra, Weyl character formula, denominator identity,  Kac--Wakimoto character formula, supercharacters, Kac modules.
\end{abstract}

\maketitle
\bibliographystyle{amsalpha}  
\setcounter{tocdepth}{3}

\section{Introduction}\label{sec:intro}

The classical Weyl character formula describes the character of a simple finite-dimensional module over a Lie algebra in terms of characters of modules which are easy to compute. This formula admits generalizations to infinite-dimensional Lie algebras as well as to some Lie superalgebras. 
It turns out that a particularly interesting case is when the formula is specialized to the trivial representation. In this case, one side of the equality is easy to understand, namely it is 1, and since the other side is a fraction, the resulting identity is called the \emph{denominator identity}.

Denominator identities  for Lie algebras  have numerous applications to algebra,  combinatorics and number theory. 
For example, the denominator identities for affine Lie algebras \cite{Kac74} \black turned out to be the famous Macdonald identities \cite{MR0357528}. The simplest case of this is the Jacobi triple product identity, which is the Macdonald identity for the affine root system of type $A_1$ and is the denominator identity for the affine Lie algebra $\widehat{\mathfrak{sl}(2)}$.

Denominator identities for Lie superalgebras also have important applications. For example, V.~Kac and M.~Wakimoto showed that a specialization of the affine denominator give formulas for computing the number of ways to decompose an integer as a sum of $k$-squares or as a sum of triangular numbers for certain values of $k$ in \cite[Section 5]{MR1327543}.
In addition, super denominator identities were used to determine the simplicity conditions of various $W$-algebras in  \cite{MR2323540,MR2512631s} and to recover the Theta correspondence for compact dual pairs of real Lie groups in \cite{MR2902248}.

The denominator identity for simple Lie algebras and symmetrizable Kac--Moody algebras takes the form
$$e^{\rho} R= \sum_{w\in W}(\sgn w)w\left( e^{\rho}\right),$$where $R$ is the denominator, $W$ is the Weyl group and $\rho$ is a certain element in the dual of the Cartan subalgebra  \cite{Kac74}.
For Lie superalgebras, one needs another ingredient, namely the notion of a maximal isotropic set of roots, which was introduced by V.~Kac and M.~Wakimoto in \cite{MR1327543}  and now plays a key role in character formulas for Lie superalgebras. 

Denominator identities for simple basic classical Lie superalgebras and queer Lie superalgebras, and  for  their non-twisted affinizations were formulated by V.~Kac and M.~Wakimoto in \cite{MR1327543}, where they proved the defect one case.  The proofs of these denominator identities were completed by M.~Gorelik and D.~Zagier in \cite{Gorelik-Maria-2010,MR2796063,MR2866851,MR1809285}, and generalizations appeared in \cite{gorelik2012denominator,MR3244923,MR2902248}.

The only  classical  finite-dimensional Lie superalgebra that remained was the periplectic Lie superalgebra $\mathfrak{p}(n)$. This algebra first appeared in V.~Kac's well-known classification of simple finite-dimensional Lie superalgebras \cite{MR0486011}. The representation theory of $\mathfrak{p}(n)$ has been studied by V.~Serganova in \cite{Ser02} and others in \cite{Moo03,Che15}; however, one difficulty arises owing to the lack of a quadratic Casimir element (see \cite{G01} for a description of the center of the universal enveloping algebra of $\mathfrak{p}(n)$).  Recently, a large breakthrough  in the representation theory of $\mathfrak p(n)$ was accomplished after the introduction of a ``fake Casimir'', a.k.a. tensor Casimir \cite{BDEHHILNSS}. This advancement has promoted a resurgence of interest in the Lie superalgebra $\mathfrak{p}(n)$; see for example \cite{CC,EAS18,EAS19,IRS19}.

In this paper, we state and  prove two different denominator identities for the periplectic Lie superalgebra, namely for two nonconjugate Borel subalgebras $\mathfrak{b}^{\thick}$ and $\mathfrak{b}^{\thin}$ (see Section~\ref{sec rs}).  These identities take the form
$$
e^{\rho} R
= \sum_{w\in W} (\sgn w) w
 \left(  
\frac{e^{\rho}}{(1-e^{-\beta_1})(1-e^{-\beta_1-\beta_2})\cdots (1-e^{-\beta_1-\ldots -\beta_{r}})} 
\right),
	$$
where  $\{\beta_1,\ldots,\beta_r\}$ is an explicitly defined maximal set of mutually orthogonal odd roots. These denominator identities are similar to the identities  given in \cite[Thm. 1.1]{papi2010denominator} for basic Lie superalgebras.
For the Borel subalgebra $\mathfrak{b}^{\thin}$, we also have an identity in a form similar to \cite[Thm 2.1]{MR1327543}.
We note that other Borel subalgebras of $\mathfrak{p}(n)$ cannot admit a denominator identity in the classical form, as $e^{\rho} R$ is not $W$-anti-invariant in these other cases (see Remark~\ref{other borels}). 

The proofs given in this paper are of a combinatorial nature and do not rely on deep theorems. Note that the standard proof techniques used for basic classical Lie superalgebras do not apply here as the periplectic Lie superalgebra lacks an invariant bilinear form and a  Casimir element.

We hope that these identities will set the ground to prove a general character formula for all finite-dimensional simple modules over $\mathfrak p(n)$.\\

Our paper is organized as follows. Section~\ref{sec prelim} contains preliminary definitions and lemmas. In Sections~\ref{sec thin} and \ref{sec thick}, we state and prove the thin and thick denominator identities, respectively. The final section of our paper contains a remark on character formulas
and an open problem concerning the homological complex behind these denominator identities.

\subsection{Acknowledgments} The authors would like to thank Maria Gorelik and Inna Entova-Aizenbud for useful discussions and helpful suggestions.

%%%%%%%%%%%%%%%%%%%%%%%%%%%%%%%%%%%%
%%%%%%%%%%%%%%%%%%%%%%%%%%%%%%%%%%%%

\section{The periplectic Lie superalgebra}\label{sec prelim}

    \subsection{Lie superalgebras}

    Let $\mathfrak{gl}(n|n)$, the general linear Lie superalgebra over $\mathbb{C}$, and 
    let $V = V_{\bar 0}\oplus V_{\bar 1}$ be a $\mathbb{Z}_2$-graded vector superspace.  
    The parity of a homogeneous vector $v\in V_{\bar 0}$ is defined as 
    $\bar v=\bar 0\in \mathbb{Z}_2 =\{ \bar 0 , \bar 1 \}$, 
    while the parity of an odd vector $v\in V_{\bar 1}$ is defined as $\bar v = \bar 1$. 
    If the parity of a vector $v$ is $\bar 0$ or $\bar 1$, 
    then $v$ has degree $0$ or $1$, respectively.  
    If the notation $\bar v$ appears in formulas, we will assume that $v$ is homogeneous. 

    The general linear Lie superalgebra may be identified with the endomorphism algebra 
    $\End(V_{\bar{0}}\oplus V_{\bar 1})$, where 
    $\dim \: V_{\bar 0} = \dim \: V_{\bar 1}=n$. Then $\mathfrak{gl}(n|n) 
    = \mathfrak{gl}(n|n)_{\bar 0}\oplus \mathfrak{gl}(n|n)_{\bar 1}$, 
    where 
    \[ 
    \mathfrak{gl}(n|n)_{\bar 0} = \End(V_{\bar 0}) \oplus \End(V_{\bar 1}) 
    \qquad 
    \mbox{ and } 
    \qquad 
    \mathfrak{gl}(n|n)_{\bar 1} = \Hom(V_{\bar 0}, V_{\bar 1}) \oplus \Hom(V_{\bar 1},V_{\bar 0}). 
    \] 
    So $\mathfrak{gl}(n|n)_{\bar 0}$ consists of parity-preserving linear maps while $\mathfrak{gl}(n|n)_{\bar 1}$ consists of parity-switching maps. 
    We also have a bilinear operation on $\mathfrak{gl}(n|n)$: 
    \[
    [x,y] = xy-(-1)^{\bar x \bar y}yx 
    \] 
    on homogeneous elements, which then extends linearly to all of $\mathfrak{gl}(n|n)$. 
    By fixing a basis of $V_{\bar 0}$ and $V_{\bar 1}$, the Lie superalgebra $\mathfrak{gl}(n|n)$ can be realized as the set of $2n\times 2n$ matrices, where 
    \[ 
    \mathfrak{gl}(n|n)_{\bar 0} = 
    \left\{  
    \begin{pmatrix}
    A & 0 \\ 
    0 & B \\ 
    \end{pmatrix} : 
    A, B\in M_{n,n}
    \right\}
    \quad 
    \mbox{ and }
    \quad 
    \mathfrak{gl}(n|n)_{\bar 1} = 
    \left\{  
    \begin{pmatrix} 
    0 & C \\ 
    D & 0 \\ 
    \end{pmatrix} : 
    C,D \in M_{n,n}
    \right\}, 
    \] 
    where $M_{n,n}$ are $n\times n$ complex matrices. 

    Let $V$ be an $(n|n)$-dimensional vector superspace equipped with a non-degenerate odd symmetric form
    \begin{eqnarray}
    \label{beta}
    \beta:V\otimes V\to\mathbb C,\quad \beta(v,w)=\beta(w,v), \quad\text{and}\quad \beta(v,w)=0 \quad \text{if} \quad  \bar{v}=\bar{w}.
    \end{eqnarray}
    Then $\op{End}_{\mathbb C}(V)$ inherits the structure of a vector superspace from $V$. 
    
    The periplectic superalgebra $\mathfrak{p}(n)$ is defined to be the Lie superalgebra of all $X\in\operatorname{End}_{\mathbb C}(V)$ preserving $\beta$, i.e., $\beta$ satisfies the condition  $$\beta(Xv,w)+(-1)^{\bar{X}\bar{v}}\beta(v,Xw)=0.$$

    \begin{remark}\label{rmk:basis}
    With respect to a fixed bases for $V$, the matrix of $X\in \mathfrak{p}(n)$ has the form 
    $\left(\begin{smallmatrix}
    A&B\\
    C&-A^t
    \end{smallmatrix}
    \right)$, 
    where $A$, $B$, $C$ are $n\times n$ matrices such that $B^t=B$ and $C^t=-C$. 
     Note that $\fp(n)$ is not itself simple; however, the subalgebra  $\mathfrak{sp}(n)$ obtained by imposing the additional condition $\tr A = 0$ is simple and has codimension $1$ in $\mathfrak{p}(n)$.
       \end{remark}

     \subsection{Root systems}\label{sec rs}
    \label{subsection:root-systems}
  From this point on, we will let $\mathfrak{g}:= \mathfrak{p}(n)$. Fix the Cartan subalgebra $\mathfrak{h}$ of $\mathfrak g$ which consists of diagonal matrices and let  $\{ \varepsilon_1,\ldots, \varepsilon_n \}$ be the standard basis of $\mathfrak h^*$.  Note that $\mathfrak{h} \subset \mathfrak{g}_{\bar{0}}$. We have a root space decomposition 
  $\mathfrak{g} 
= \mathfrak{h}\oplus \left( \bigoplus_{\alpha\in \Delta} \mathfrak{g}_{\alpha}\right)$, where $\Delta$ denotes the set of roots of $\gg$. The set $\Delta$ decomposes as
$$
\Delta=
 \Delta(\mathfrak{g}_{-1})\cup \Delta(\mathfrak{g}_0)\cup \Delta(\mathfrak{g}_1), 
$$
 where 
\begin{align*} 
\Delta(\mathfrak{g}_0) &= \{ 
\varepsilon_i - \varepsilon_j: 1\leq i \not= j \leq n
\},   \\
\Delta(\mathfrak{g}_1) = 
\{ 
\varepsilon_i + \varepsilon_j : 1\leq i\le j &\leq n 
\} \quad 
\mbox{ and }
\quad 
\Delta(\mathfrak{g}_{-1}) = 
\{ 
-(\varepsilon_i + \varepsilon_j) : 1\leq i< j\leq n 
\}, 
\end{align*}
and moreover, $\mathfrak{g}$ has a (short) $\mathbb Z$-grading $\mathfrak g=\mathfrak g_{-1}\oplus\mathfrak g_0\oplus\mathfrak g_1$, where  $\mathfrak g_k:=\bigoplus_{\alpha\in \Delta(\mathfrak{g}_{k})} \mathfrak{g}_{\alpha}$.  This $\mathbb Z$-grading is compatible with the Lie superalgebra structure on $\mathfrak g$ as $\mathfrak g_{\bar 0}=\mathfrak g_0$ and  $\mathfrak g_{\bar 1}=\mathfrak g_{-1}\oplus\mathfrak g_1$.

Additionally, $\Delta$ decomposes into even and odd roots $\Delta=\Delta_{\bar 0}\cup\Delta_{\bar 1}$, where $\Delta_{\bar 0} = \Delta_{0}$ and $\Delta_{\bar 1} = \Delta_{-1}\cup\Delta_{ 1}$. 
We can choose a set of positive roots $\Delta^+\subset\Delta$ and consider the corresponding Borel subalgebra $\mathfrak b=\mathfrak{h}\oplus \left( \bigoplus_{\alpha\in \Delta^+} \mathfrak{g}_{\alpha}\right)$. In what follows, we will always assume that 
$$\Delta^+_{\bar 0} = \{ \varepsilon_i-\varepsilon_j:1\leq  i < j\leq n \}$$
and that $\Delta_{\bar 1}^+$ is either $\Delta(\mathfrak{g}_1)$  or $\Delta(\mathfrak{g}_{-1})$. We denote the corresponding Borel subalgebras of  $\mathfrak g$ by $\mathfrak{b}^{\thick}:=\mathfrak{b}_0\oplus \mathfrak{g}_1$ and $\mathfrak{b}^{\thin}:=\mathfrak{b}_0\oplus \mathfrak{g}_{-1}$, respectively. 
Let $\rho_{\bar{0}}:=\frac{1}{2}\left(\sum_{\alpha\in\Delta_{\bar{0}}^{+}}\alpha\right)$, 
	 $\rho_{\bar{1}}:=\frac{1}{2}\left(\sum_{\alpha\in\Delta_{\bar{1}}^{+}}\alpha\right)$ and
	$\rho=\rho_{n}:=\rho_{\bar{0}}-\rho_{\bar{1}}$.

We will work over the rational function field generated by $e^\lambda$, where $\lambda\in\mathfrak{h}^*$. 
Let 
\[ 
R_{0,n} = \prod_{\alpha\in\Delta^+(\mathfrak{g}_{0})} (1-e^{-\alpha}), 
\qquad 
R_{1,n}=\prod_{\alpha\in\Delta(\mathfrak{g}_{1})} (1-e^{-\alpha}), 
\qquad 
R_{-1,n}=\prod_{\alpha\in\Delta(\mathfrak{g}_{-1})} (1-e^{-\alpha}). 
\] 
We will write ${R}_0$, ${R}_{1}$ ${R}_{-1}$, 
respectively, when it is clear what the algebra is. We let $R=\frac{R_0}{R_{\bar 1}}$, where $R_{\bar 1}=R_{1}$ or $R_{-1}$.

\begin{remark}
The inverse of $e^{2\rho} R$ is the supercharacter of $U(\mathfrak{n})$ (up to a sign), where $\mathfrak{n}$ denotes the nilradical of $\mathfrak{b}$ (here $\mathfrak{b}=\mathfrak{b}^{\thick}$ or $\mathfrak{b}^{\thick}$ for $R_{\bar 1}=R_{1}$ or $R_{-1}$, respectively). One can also consider character versions of the denominator identity. See Section~\ref{sec char}.
\end{remark}

A polynomial $f$ is anti-invariant or skew-invariant if 
$w.f=(\sgn w)f$ for any $w\in W$. 
Note that ${R}_{-1}$, ${R}_{1}$ are  $W$-invariant and  $e^{\rho}{R}_0$ is $W$-anti-invariant. 

\begin{remark} \label{other borels} 
The sets $\Delta(\mathfrak g_1)$ and $\Delta(\mathfrak g_{-1})$ are the only choices of positive odd roots for which $e^\rho R$ is $W$-anti-invariant with respect to our fixed choice of positive even roots. Thus,  there does not exist a denominator identity for other  Borel subalgebras, since $e^\rho R$ can not equal an alternating sum over the Weyl group. 
\end{remark}

Define 
$\mathcal{F}_W(a) := \sum_{w\in W} (\sgn w) w(a)$.  
Let $y=\sum_{\mu} a_{\mu}e^{\mu}$, where $a_{\mu}\in \mathbb{Q}$. The support of $y$ is defined to be 
\[
\supp(y) =\{ \mu: a_{\mu}\not=0\}. 
\] 
An element $\lambda=\sum a_i \varepsilon_i$ is regular if and only if it has a distinct coefficient for
 	every  $\varepsilon_{i}$. 
That is, $\lambda$ is regular if $w(\lambda) = \lambda$ implies $w=\Id$. 
Moreover, its orbit is of maximal size. 

We cite \cite[Lemma 4.1.1 (ii)]{MR2866851}:  
\begin{lemma}
\label{lem:stab-refl}
For any 
$\mu\in\mathfrak{h}_{\mathbb{R}}^{*}$, 
the stabilizer of $\mu$ in $W$ is either trivial or contains a reflection.
\end{lemma}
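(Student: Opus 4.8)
The plan is to make the $W$-action completely explicit. Since the even root system $\Delta_{\bar 0} = \Delta_0 = \{\varepsilon_i - \varepsilon_j\}$ is of type $A_{n-1}$, the Weyl group $W$ is the symmetric group $S_n$, acting on $\mathfrak{h}^*$ by permuting the basis $\{\varepsilon_1,\ldots,\varepsilon_n\}$; and the reflections of $W$ are precisely the transpositions $(i\,j)$, each acting as the reflection in the hyperplane $\{a_i = a_j\}$ orthogonal to $\varepsilon_i - \varepsilon_j$.

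Write $\mu = \sum_{i=1}^n a_i \varepsilon_i$ with $a_i \in \mathbb{R}$. For $w \in S_n$ one has $w(\mu) = \sum_i a_i \varepsilon_{w(i)} = \sum_j a_{w^{-1}(j)} \varepsilon_j$, so $w \in \stab_W(\mu)$ if and only if $a_{w^{-1}(j)} = a_j$ for every $j$; that is, the function $i \mapsto a_i$ must be constant on the orbits of $w$. Grouping $\{1,\ldots,n\}$ into the blocks $I_1,\ldots,I_k$ on which $i \mapsto a_i$ is constant, this identifies $\stab_W(\mu) = S_{I_1} \times \cdots \times S_{I_k}$, a Young subgroup of $S_n$.

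Now suppose $\stab_W(\mu)$ is nontrivial. Then some block $I_m$ satisfies $|I_m| \ge 2$, and choosing distinct $i, j \in I_m$ gives $a_i = a_j$, so the transposition $(i\,j)$ fixes $\mu$ and lies in $\stab_W(\mu)$. Since $(i\,j)$ is a reflection, this establishes the claimed dichotomy.

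There is no real obstacle here; the only points needing care are the index bookkeeping for the $S_n$-action on $\mathfrak{h}^*$ (whether $w$ or $w^{-1}$ appears is irrelevant to the conclusion, since a function is constant on $w$-orbits iff it is constant on $w^{-1}$-orbits) and the identification of the reflections of a type-$A_{n-1}$ Weyl group with transpositions. If one wants a type-independent argument, the same conclusion follows from Steinberg's theorem that the stabilizer of a point in a finite reflection group is generated by the reflections it contains, so a nontrivial stabilizer necessarily contains a reflection; but for $W = S_n$ the direct computation above is the shortest route.
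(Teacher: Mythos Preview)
Your argument is correct. The identification $W=S_n$ acting by coordinate permutation, the description of the stabilizer as a Young subgroup, and the extraction of a transposition from any nontrivial block are all valid; the remark about Steinberg's theorem as a type-free alternative is also accurate.

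The paper itself does not prove this lemma at all: it simply quotes it from \cite[Lemma~4.1.1(ii)]{MR2866851}, where the statement is formulated for general Weyl groups. Your proof is therefore not so much a different route as the only route actually written down here. What you gain is self-containment and brevity for the specific case $W=S_n$ needed in this paper; what the citation buys is generality (any finite reflection group) without repeating a standard fact. Either is entirely adequate for the application, since the lemma is only used to conclude that $\mathcal F_W(e^\mu)=0$ for non-regular $\mu$.
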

This implies the stabilizer of a non-regular point $\mu$ in $W$ contains a reflection. Thus the space of $W$-anti-invariant elements is spanned by $\mathcal F_{W}\left( e^\mu\right)$, where $\mu$ is regular.

We call the orbit $W(\mu)$ regular if $\mu$ is regular; thus, regular orbits consist of regular points.
\begin{lemma}\label{orbit}
The  support of a $W$-anti-invariant element is a union of regular $W$-orbits. 
\end{lemma}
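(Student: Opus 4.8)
The plan is to show directly that any element $y$ which is $W$-anti-invariant has support consisting of full, regular $W$-orbits. First I would take $\mu \in \supp(y)$ and argue that $\mu$ must be regular: if not, then by Lemma~\ref{lem:stab-refl} the stabilizer $\stab_W(\mu)$ contains a reflection $s$. Writing $y = \sum_{\nu} a_\nu e^\nu$ with $a_\mu \neq 0$, the anti-invariance condition $s.y = (\sgn s)y = -y$ forces, upon comparing the coefficient of $e^\mu$ on both sides, $a_{s(\mu)} = -a_\mu$; but $s(\mu) = \mu$, so $a_\mu = -a_\mu$, hence $a_\mu = 0$, a contradiction. Therefore every element of $\supp(y)$ is regular.

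Next I would show that $\supp(y)$ is a union of whole $W$-orbits. Fix $\mu \in \supp(y)$ and let $w \in W$ be arbitrary. Comparing the coefficient of $e^{w(\mu)}$ in the identity $w.y = (\sgn w)y$: the left side contributes $a_\mu$ (the coefficient of $e^{w(\mu)}$ in $w.y$ equals the coefficient of $e^{\mu}$ in $y$), while the right side contributes $(\sgn w)a_{w(\mu)}$. Hence $a_{w(\mu)} = (\sgn w)a_\mu$, which is nonzero since $a_\mu \neq 0$. Thus $w(\mu) \in \supp(y)$ for all $w \in W$, i.e.\ the entire orbit $W(\mu)$ lies in $\supp(y)$. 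Combining the two steps, $\supp(y)$ is a union of orbits $W(\mu)$ with $\mu$ regular, which are exactly the regular $W$-orbits.

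I expect the only mild subtlety is bookkeeping with the action: one must be careful that $w$ acts on $\mathfrak{h}^*$ (equivalently permutes the exponents $e^\nu$) linearly, so that the coefficient of $e^{w(\mu)}$ in $w(y)$ is indeed $a_\mu$ and no cancellation among distinct exponents occurs. Since the $e^\nu$ are linearly independent in the function field we are working over, this comparison of coefficients is legitimate. There is no real obstacle here; the statement is essentially a direct consequence of Lemma~\ref{lem:stab-refl} together with the linear independence of the $e^\nu$, and the argument is purely formal once those are in hand.
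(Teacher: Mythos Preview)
Your proof is correct and uses essentially the same idea as the paper: the key point in both is that a non-regular $\mu$ has a reflection in its stabilizer, which forces the coefficient $a_\mu$ to vanish. The paper's one-line proof packages this via the averaging operator $\mathcal F_W$ (using the remark just before the lemma that anti-invariants are spanned by $\mathcal F_W(e^\mu)$ with $\mu$ regular, together with $\mathcal F_W(e^\lambda)=0$ for non-regular $\lambda$), whereas you argue directly by comparing coefficients; your version is more self-contained and makes the ``union of full orbits'' part explicit, but the substance is the same.
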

 
\begin{proof}   
Since $\mathcal F_{W}\left( e^\lambda\right)=0$ for non-regular $\lambda $, only regular elements appear in the support of $\mathcal F_{W}(a)$.
\end{proof}

%%%%%%%%%%%%%%%%%%%%%%%%%%%%%%%%%%%%%%
%%%%%%%%%%%%%%%%%%%%%%%%%%%%%%%%%%%%%%
 
\section{Thin denominator identity for $\mathfrak{p}(n)$}\label{sec thin}
   
 In this section, we present denominator identities for the Borel subalgebra $\mathfrak{b}^{\thin}$ of $\fp(n)$, namely when 
    $\Delta_{\bar{1}}^{+}=\Delta(\mathfrak{g}_{-1})$. In this case, $\rho_{\bar{1}}= \left(\frac{1-n}{2}\right)\sum_{i=1}^{n} \varepsilon_i $ and
    $
    \rho % = (n-1) \varepsilon_1 + (n-2)\varepsilon_2 + \ldots + \varepsilon_{n-1} 
    = \sum_{i=1}^{n} (n-i)\varepsilon_i 
    $. 
    
    Let
	$R=\frac{R_0}{R_{-1}}$, where $R_{-1}=\prod_{1\le i<j\le n}\left(1-e^{\varepsilon_i+\varepsilon_j}\right)$.
        Set $r=\left\lfloor\frac{n}{2}\right\rfloor$     and let    
        $$
        \beta_1 = -(\varepsilon_1+\varepsilon_2), 
        \hspace{3mm}
        \beta_2 = -(\varepsilon_3+\varepsilon_{4}),   	\ldots, 	
        \beta_{r} =-(\varepsilon_{2r-1}+\varepsilon_{2r}).
        $$
         We define 
$$\rho^{\Uparrow}:=\rho+(n-2)\beta_1+(n-4)\beta_2+\ldots+(n-2r)\beta_{r}    =\varepsilon_1+\varepsilon_3+\ldots+\varepsilon_{2r-1}.
   $$

   %$S=\{-(\varepsilon_1+\varepsilon_2),-(\varepsilon_3+\varepsilon_{4}),\ldots,-(\varepsilon_{2r-1}+\varepsilon_{2r})\}$.
Here is one form of the thin denominator identity. 
	\begin{theorem}\label{th1} Let $\mathfrak g=\fp(n)$ and $\Delta_{\bar{1}}^{+}=\Delta(\mathfrak{g}_{-1})$. 
	Then  
	$$	%\label{eqn:thm-001}
	e^{\rho}R=\frac{1}{r !}
	     \sum_{w\in W} (\sgn w) w\left(\frac{e^{\rho^{\Uparrow}} }{
	\prod_{\beta\in S} \left(1- e^{-\beta} \right)} 
	   \right),
	$$
	where $S=\{\beta_1,\beta_2,\ldots,\beta_r\}$.
	\end{theorem}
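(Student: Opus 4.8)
The plan is to expand the right-hand side directly and match supports with the left-hand side, using $W$-anti-invariance to reduce everything to a single regular orbit representative. First I would rewrite $e^{\rho}R = e^{\rho}R_0 / R_{-1}$ and observe that, since $R_{-1}$ is $W$-invariant and $e^{\rho}R_0$ is $W$-anti-invariant (as recorded just before Remark~\ref{other borels}), the left-hand side is $W$-anti-invariant. Hence by Lemma~\ref{orbit} its support is a union of regular $W$-orbits, and it suffices to identify the coefficient of $e^{\mu}$ for one regular $\mu$ in each orbit — the natural choice being $\mu$ of the form $\rho^{\Uparrow} + (\text{something in the negative cone spanned by the }\beta_i)$, since $\rho$ itself is dominant regular and $\rho^{\Uparrow} = \rho + \sum_i (n-2i)\beta_i$.

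Next I would expand $\frac{1}{\prod_{\beta\in S}(1-e^{-\beta})} = \prod_{i=1}^r \sum_{k_i\ge 0} e^{-k_i\beta_i} = \sum_{k_1,\dots,k_r\ge 0} e^{-(k_1\beta_1+\cdots+k_r\beta_r)}$, so that the summand inside $\mathcal F_W$ becomes $\sum_{\mathbf k\ge 0} e^{\rho^{\Uparrow} - \sum_i k_i\beta_i}$. Applying $\mathcal F_W$ and dividing by $r!$, the right-hand side is $\frac{1}{r!}\sum_{\mathbf k\ge 0} \mathcal F_W\!\left(e^{\rho^{\Uparrow} - \sum_i k_i\beta_i}\right)$. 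The key combinatorial point is that $\mathcal F_W\!\left(e^{\lambda}\right)$ vanishes unless $\lambda$ is regular, and that distinct regular weights in the same $W$-orbit contribute $\pm$ the same basis element $\mathcal F_W(e^{\mu})$. So I would analyze, for each $\mathbf k$, whether $\rho^{\Uparrow} - \sum_i k_i\beta_i = \varepsilon_1 + \varepsilon_3 + \cdots + \varepsilon_{2r-1} + \sum_i k_i(\varepsilon_{2i-1}+\varepsilon_{2i})$ is regular, and if so, to which dominant representative it is $W$-conjugate. The expected outcome is that each regular $W$-orbit appearing in $e^{\rho}R$ is hit exactly $r!$ times with a consistent sign (the $r!$ accounting for permutations among the $r$ mutually orthogonal pairs $\{2i-1,2i\}$, which the factor $\frac{1}{r!}$ then cancels), while the non-regular $\mathbf k$ contribute zero.

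To make the support comparison precise on the left, I would use the standard identity $e^{\rho_{\bar 0}}R_0 = \mathcal F_W(e^{\rho_{\bar 0}})$ for the underlying reductive even part (type $A_{n-1}$), so that $e^{\rho}R = e^{\rho-\rho_{\bar 0}}\,\mathcal F_W(e^{\rho_{\bar 0}})/R_{-1} = \mathcal F_W\!\left(e^{\rho-\rho_{\bar 1}}\cdots\right)$ after absorbing the $W$-invariant factor $e^{-\rho_{\bar 1}}$... more carefully, write $e^{\rho}R = \mathcal F_W(e^{\rho})\cdot \frac{1}{R_{-1}}\cdot e^{-\text{(correction)}}$ is not quite right since $e^{-\rho_{\bar 1}}$ is not $W$-invariant; instead I would keep $R_{-1}$ as a genuine product $\prod_{1\le i<j\le n}(1-e^{\varepsilon_i+\varepsilon_j})$ in the denominator and clear it, reducing the theorem to the polynomial identity $e^{\rho}R_0\cdot \prod_{\beta\in S}(1-e^{-\beta}) \cdot r! \overset{?}{=} R_{-1}\cdot \mathcal F_W(e^{\rho^{\Uparrow}})$, and then compare both sides orbit-by-orbit. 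I would check a small case ($n=2$, where $r=1$, $S=\{-(\varepsilon_1+\varepsilon_2)\}$, $W=\mathbb Z/2$) to fix signs and normalizations.

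The main obstacle I anticipate is the bookkeeping of which weights $\rho^{\Uparrow} - \sum_i k_i\beta_i$ are regular and the verification that each regular orbit of $e^{\rho}R$ is covered with multiplicity exactly $r!$ and the correct sign — in other words, setting up a clean bijection between $\{\mathbf k \in \mathbb Z_{\ge 0}^r : \rho^{\Uparrow} - \sum k_i\beta_i \text{ regular}\}$, modulo the $W$-action, and the actual support of $e^{\rho}R$. A secondary subtlety is handling the parity $n$ even versus $n$ odd uniformly, since when $n$ is odd the index $\varepsilon_n$ is not paired with any $\beta_i$ and must be tracked separately in the regularity analysis; I expect the argument goes through with only notational changes, the unpaired coordinate $\varepsilon_n$ contributing a fixed value that never collides with the others.
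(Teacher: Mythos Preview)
Your overall direction --- clear denominators, reduce to a $W$-anti-invariant polynomial identity, then compare regular orbits --- is the paper's strategy as well. But there are two concrete errors in your third paragraph and one missing key step.

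First, $\rho_{\bar 1}=\frac{1-n}{2}\sum_i\varepsilon_i$ \emph{is} $W$-invariant (the Weyl group $S_n$ simply permutes the $\varepsilon_i$), so your abandonment of the manipulation $e^\rho R_0=\mathcal F_W(e^\rho)$ was based on a false worry; that step is fine and the paper uses it.

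Second, the polynomial identity you write down, $r!\,e^\rho R_0\prod_{\beta\in S}(1-e^{-\beta})=R_{-1}\,\mathcal F_W(e^{\rho^\Uparrow})$, is false. Already for $n=3$ one has $\rho^\Uparrow=\varepsilon_1$, which is not regular, so the right side vanishes while the left does not. The error is that $\prod_{\beta\in S}(1-e^{-\beta})$ is not $W$-invariant and cannot be pulled through $\mathcal F_W$. The correct move is to multiply the original identity only by the $W$-invariant factor $R_{-1}$, giving
\[
r!\,e^\rho R_0 \;=\; \mathcal F_W\!\left(e^{\rho^\Uparrow}\prod_{\alpha\in\Delta(\mathfrak g_{-1})\setminus S}(1-e^{-\alpha})\right),
\]
which is now a genuine identity of \emph{finite} polynomials. (The paper first applies a permutation $\tau_n$ to replace $S$ by $S'=\{-(\varepsilon_i+\varepsilon_{n+1-i})\}$ and $\rho^\Uparrow$ by $\varepsilon_1+\cdots+\varepsilon_r$, but this is cosmetic.)

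Third --- and this is the missing idea that replaces your anticipated ``bijection between infinite families of regular $\mathbf k$'' --- once you call the polynomial on the right $\mathcal A$, a count of how many factors touch each index shows
\[
\supp(\mathcal A)\subset\Bigl\{\textstyle\sum k_i\varepsilon_i:\ 0\le k_i\le n-1\Bigr\}.
\]
The only regular $S_n$-orbit meeting this box is $W(\rho)$ itself. Hence the entire identity collapses to computing the \emph{single} integer $j=\sum_{y\in W}(\sgn y)a_{y\rho}$ and showing $j=(\sgn\tau_n)\,r!$. No infinite orbit-matching is ever needed. The paper then exploits an $S_r$-symmetry of $\mathcal A$ (permuting the pairs $\{i,n+1-i\}$, exactly the symmetry behind your ``$r!$ overcount'') to reduce to coset representatives $S_n/S_r$, and a short induction on the coordinates pins down the unique subset $P'\subset U$ contributing to $a_\rho$.

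As a side remark, the geometric-series expansion you sketch in your first two paragraphs is precisely the mechanism the paper uses to deduce Theorem~\ref{thm thin sum of betas} \emph{from} Theorem~\ref{th1}, not to prove Theorem~\ref{th1} itself.
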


	\begin{proof}
	By applying the permutation $\tau_n:=(2t-1\to t; 2t\to n+1-t)$ to the RHS we obtain the equivalent expression
	\begin{equation}
	\label{eqn:perm}
	e^{\rho}R= (\sgn \tau_n)\frac{1}{r !}
	     \sum_{w\in W} (\sgn w) w\left(\frac{e^{\vareps_1+\vareps_2+\ldots+\vareps_{r}} }{
	\prod_{\beta'\in S'} \left(1- e^{-\beta'} \right)} 
	   \right),
	\end{equation}
	where $S'=\{-(\varepsilon_1+\varepsilon_n),-(\varepsilon_2+\varepsilon_{n-1}),\ldots,-(\varepsilon_{r}+\varepsilon_{n+1-r})\}$.
	
	Since $R_{-1}$ is $W$-invariant, equation \eqref{eqn:perm} is equivalent to
		\begin{equation}\label{eqn 001}
     (\sgn \tau_n){r !}e^{\rho}R_{0}=
     \mathcal F_W \left(e^{\vareps_1+\vareps_2+\ldots+\vareps_{r}} 
	\prod_{\alpha\in\Delta(\mathfrak g_{-1})\setminus S' } 
	\left(1- e^{-\alpha} \right)   \right).
	\end{equation}
	Since $\rho_{\bar{1}}$ is $W$-invariant,  the Weyl denominator identity for $\mathfrak{sl}(n)$ 
	yields the equivalent expression
		\begin{equation}\label{eqn U}  
		(\sgn\tau_n){r !}\mathcal F_W \left( e^{\rho}\right) =
		\mathcal F_W \left(e^{\vareps_1+\vareps_2+\ldots+\vareps_{r}} 
	\prod_{(i,j)\in U} (1-e^{\vareps_i+\vareps_j})\right),
		\end{equation}
	where
	\[
	U := \{(i,j)\in\mathbb{Z}_{>0}\times\mathbb{Z}_{>0}\mid  1\leq i<j\leq n,\ i+j\not=n+1\}.
	\]
	Note that the Weyl denominator identity for the Lie algebra $\mathfrak{g}_{\bar{0}}$ gives $e^{\rho_{\bar{0}}}R_0=\mathcal{F}_W 
	\left( e^{\rho_{\bar{0}}}\right)$. 
	So we can multiply both sides by $e^{-\rho_{\bar{1}}}$ and pass it through  $\mathcal F_W$ since it is invariant.
	We also note that both sides of 
	\eqref{eqn U} are $W$-skew-invariant.
	Let
	\begin{equation}\label{def A} 
	\mathcal A:=e^{\vareps_1+\vareps_2+\ldots+\vareps_{r}} 
	\prod_{(i,j)\in U} (1-e^{\vareps_i+\vareps_j})
	= \sum_{\nu} a_{\nu}e^{\nu}.
	\end{equation}
	Then
	\begin{equation}\label{supp A}
	\supp (\mathcal A)\subset \left\{\sum_{i=1}^n k_i\vareps_i: 0\leq k_i\leq n-1,\ 1\leq k_{1},\ldots,k_{r} 
	\leq n-1,\  0\leq k_{n+1-r},\ldots,k_n\leq n-2\right\},
	\end{equation}
	since 
    $\mathcal A$ can be expressed using $k_i$, i.e., 
    we get the bounds on the coefficients $k_i$ 
    by counting the elements in the set $U$, 
	and the regular elements in $\supp (\mathcal A)$ lie in the orbit $W(\rho)$ 
	since elements in the orbits of $W(\rho)$ have coefficients $0,1,\ldots,n-1$ (in some order). 
	Now since $\mathcal F_{W}\left( e^\mu\right)=0$ for non-regular elements $\mu\in\mathfrak h^* $, 
	we have that the RHS of \eqref{eqn U} equals
	$$
	\mathcal F_{W}\left(\mathcal A\right)=\mathcal F_W\left( \sum_{y\in W} a_{y\rho} e^{y\rho}\right)
	= j\ \mathcal F_{W}\left( e^{\rho}\right),
	$$
	where 
	$$
	j:=\sum_{y\in W} (\sgn y) \ a_{y\rho},
	$$
	by switching the sums and then changing the indexing set.  
	We get a summation of the form 
	$\mathcal{F}_W\left(  e^{y\rho}\right)$, and we reindex since the sum is over all of $W$. 
	Hence to prove \eqref{eqn U} and deduce the theorem, it remains to show that $j=(\sgn\tau_n) r!$.

	Consider the following embedding $\iota: S_{r}\to S_n$: each permutation $\sigma\in S_{r}$
	maps to the corresponding permutation of the set $1,2,\ldots,r$ and
	the corresponding permutation of the set $n,n-1,\ldots,n+1-r$ 
	(for instance, for $n=5$ we have 
	$\iota((12))= (12)(54)$); 
	note that $\iota(S_{r})$ consists of even permutations. Clearly, $\mathcal A$ is $\iota(S_{r})$-invariant, as 
	$$\mathcal A=\frac{ e^{\vareps_1+\vareps_2+\ldots+\vareps_{r}}R_{-1}}{
	\prod_{\beta'\in S'} \left(1- e^{-\beta'
	}\right)}.$$ 
	So by fixing a set of representatives of the left cosets of $\iota(S_{r})$ in $S_n$
	to be 
	$$ 
	S_n/S_{r} := \{\sigma\in S_n :  \sigma(n -(r-1))< \ldots <\sigma(n-1)< \sigma(n)\},
	$$
	we have
	$$ 
	j=r !  \sum_{y\in S_n/S_{r}} (\sgn y)  a_{y\rho}.
	$$

	We derive from \eqref{def A} that
	$$
	\mathcal A=\sum_{P\subset U} (-1)^{|P|} e^{\vareps_1+\vareps_2+\ldots+\vareps_{r}+\sum_{(i,j)\in P}(\varepsilon_i+\varepsilon_j)},
	$$
	where each $P$ is a subset of $U$, and $|P|$ denotes the number of elements in $P$.
		 Suppose
	$$
	\vareps_1+\vareps_2+\ldots+\vareps_{r}+\sum_{(i,j)\in P'}(\varepsilon_i+\varepsilon_j)
	= \sum_{i=1}^n k_i\vareps_i=y'\rho
	$$
	for some $y'\in S_n/S_{r}$ and $P'\subset U$. 
		We will prove that necessarily $y'=\Id$ and
		 $$P' = 
		 \{(i,j)\in\mathbb{Z}_{>0}\times\mathbb{Z}_{>0} :  i<j,\ i+j\leq n\}.$$ 
		First, note that
	$$
	\{k_1,\ldots,k_n\}=\{0,1,\ldots,n-1\}\ \text{ and } \ \ k_{n+1-r}>\ldots >k_{n-1}> k_n 
	$$
	since $\rho = \sum_{i=1}^{n} (n-i)\varepsilon_i$ and $y'\in S_n/S_{r}$. Also, recall the conditions on $\supp (\mathcal A)$ given in \eqref{supp A}. 
	
           We will prove that $k_i=n-i$ for all $i=1,\ldots,n$. 
        Our base case is to show that $k_1=n-1$, $k_n=0$, and that $ (1,i)\in P'\ \Leftrightarrow\ i\not=1,n$.
    Now since $k_i\geq 1$ for $i\leq r$ and $k_n<k_i$ for $i\in\{n+1-r,\ldots,n-1\}$, 
	we can have either $k_n=0$ or $k_{r+1}=0$ (in the case that $n=2r+1$). 
	However, if $k_{r+1}=0$ we reach a contradiction that $(n-1)$ 
	could not occur as a coefficient. 
	Indeed, suppose $n=2r+1$ and $k_{r+1}=0$ and take $j$ 
	such that $k_j=n-1$. Then $j\leq r$ and the elements $(j,j), (j,r+1),(j,n+1-j)$ are not in $ P'$. Since $k_j=n-1$, two of these pairs must coincide, implying  $j=r+1$, which is a contradiction.
	Hence, $k_n=0$ and so $(i,n)\not\in P'$ for all $i$. Take $j$ such that  $k_j=n-1$; 
	then
	$j\leq r$ and $(j,i)\in P'$ for all $i\not=j,n+1-j,n$. Thus
	$n+1-j=n$, that is $j=1$. Therefore, we obtain $k_1=n-1$, $k_n=0$. It follows that $ (1,i)\in P'$ if and only if $i\not=1,n$.

	Let $t\leq r$, and suppose for the induction hypothesis that for all $i=1,\ldots,t-1$ we have: $k_i=n-i$, $k_{n+1-i}=i-1$, and 
	\begin{equation}\label{eq ind}
	(i,j)\in P'  \ \Leftrightarrow\ i\leq\min\{j-1,n-j\}.\end{equation} 
	One can prove that the induction hypothesis implies that $k_i\geq t$ for all $i\leq r$, and that $k_i< n-t$ for $i> r$.
	Suppose $k_p=n-t$ and $k_q=t-1$. Then $t\leq p\leq r$ and $r<  q\leq n+1-t$.  It follows (indirectly) from the induction hypothesis that  $$(p,p),(p,n+1-p),(p,q),(p,n),(p,n-1),(p,n-2),\ldots,(p,n+2-t)\not\in P'.$$  This implies
	that $p+q=n+1$. It follows that $k_t=n-t$ since $p\neq q$ and $k_t>k_i$ for all $r<i<t$. Hence, $k_{n+1-t}=t-1$. Finally, since the elements $(t,t),(t,n),(t,n-1),\ldots,(t,n+1-t)$ are not in $P'$ and yet $k_t=n-t$, we see that condition \eqref{eq ind} also holds for $i=t$. This concludes the induction proof. Hence,
	$k_i=n-i$ for each $i$ and  $y'=\Id$. Therefore,
	 $a_{y\rho}=0$ for $y\in S_n/S_r$ such that 	$y\not=\Id$. 
	
	Next we will prove that $a_{\rho}=\sgn\tau_n$.
		 For this, we need to show that $\sgn\tau_n=(-1)^{|P'|}$, where $$\vareps_1+\vareps_2+\ldots+\vareps_{r}+\sum_{(i,j)\in P'}(\varepsilon_i+\varepsilon_j)=\rho.$$
    	We claim that
 $\sgn\tau_n=1$ if $n$ is even, and $\sgn\tau_n=(-1)^k$ for 
$n=2k+1$. Indeed, one can check directly that $\tau_1,\tau_2=\Id$,
$$\tau_{2k+1}=(k+1,k+2,\ldots,2k+1)\tau_{2k},\ \ \tau_{2k}=(k+1,k+2,\ldots,2k)\tau_{2k-1}.$$
    Thus,  $\sgn\tau_{2k+2}=\sgn\tau_{2k}=1$, while
 $\sgn\tau_{2k+1}=(-1)(\sgn\tau_{2k-1})=(-1)^{k}$, and the claim follows by induction.
	On the other hand, counting coefficients for $\rho-(\vareps_1+\vareps_3+\ldots+\vareps_{2r-1})$ yields
	$$|P'|=\frac{1}{2}\left(\frac{n(n-1)}{2}-\left\lfloor\frac{n}{2}\right\rfloor\right).$$ 
	If $n$ is even then $|P'|=\frac{n(n-2)}{4}$, which is always even. If $n$ is odd then $|P'|=\left(\frac{n-1}{2}\right)^2$, which is even precisely when $n=4k+1$ for some $k\in\mathbb N$.
	Thus $a_{\rho}=(-1)^{|P'|}=\sgn\tau$.
	
	Therefore, $j=(\sgn\tau) r!$, and the theorem follows.
\end{proof}

Here is another form of the thin denominator identity.

\begin{theorem} 
\label{thm thin sum of betas} Let $\mathfrak g=\fp(n)$ and $\Delta_{\bar{1}}^{+}=\Delta(\mathfrak{g}_{-1})$. 
	Then  
\[ 
e^{\rho} R
= \sum_{w\in W} (\sgn w) w
 \left(  
\frac{e^{\rho}}{(1-e^{-\beta_1})(1-e^{-\beta_1-\beta_2})\cdots (1-e^{-\beta_1-\ldots -\beta_{r}})} 
\right),
\] 
where $r=\lfloor n/2\rfloor$ and 
$\beta_1 = -(\varepsilon_1+\varepsilon_2), 
\beta_2 = -(\varepsilon_3+\varepsilon_4),   
\ldots, 
\beta_{r} =-(\varepsilon_{2r-1}+\varepsilon_{2r})$.

\end{theorem}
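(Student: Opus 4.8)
The plan is to deduce this second form of the thin denominator identity from the first, Theorem~\ref{th1}, by a purely formal comparison of the two right-hand sides, carried out in the completion of the coefficient ring in which $\tfrac{1}{1-e^{-\gamma}}=\sum_{m\ge 0}e^{-m\gamma}$ for every $\gamma$ in the monoid $\mathbb Z_{\ge 0}\Delta^{+}$; note that $\beta_1,\dots,\beta_r\in\Delta^{+}$, so all partial sums $\gamma_k:=\beta_1+\dots+\beta_k$ lie in this monoid and $-\gamma_k=\varepsilon_1+\dots+\varepsilon_{2k}$. Expanding both fractions into geometric series and applying $\mathcal F_W:=\sum_{w\in W}(\sgn w)w(\,\cdot\,)$ term by term — legitimate because each weight receives only finitely many contributions — and then discarding the non-regular terms, which vanish under $\mathcal F_W$ (see the discussion before Lemma~\ref{orbit}), one obtains
\[
\frac{e^{\rho}}{\prod_{k=1}^{r}(1-e^{-\gamma_k})}=\sum_{n_1\ge\cdots\ge n_r\ge 0}e^{\mu(\vec n)},\qquad
\frac{e^{\rho^{\Uparrow}}}{\prod_{k=1}^{r}(1-e^{-\beta_k})}=\sum_{m_1,\dots,m_r\ge 0}e^{\mu'(\vec m)},
\]
where the $\varepsilon_{2k-1}$- and $\varepsilon_{2k}$-coefficients are $b_k+1,\,b_k$ for $\mu(\vec n)$ with $b_k:=(n-2k)+n_k$, and $m_k+1,\,m_k$ for $\mu'(\vec m)$, while both weights have coefficient $0$ on $\varepsilon_n$ when $n$ is odd. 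Here $b_1>b_2>\cdots>b_r$ with consecutive gaps $\ge 2$ and $b_r\ge 0$ (respectively $b_r\ge 1$ when $n$ is odd), and a short check shows that every $\mu(\vec n)$ is already regular, so no term is lost on the first side.

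For the second side, one checks that $\mu'(\vec m)$ is regular exactly when the $m_k$ are pairwise at distance $\ge 2$, and each $\ge 1$ when $n$ is odd; equivalently, after sorting, $(m_k)$ is exactly one of the tuples $(b_k)$ above. The key point is then that a regular $\vec m$ amounts to the data of such a tuple $(b_k)$ — equivalently a tuple $\vec n$ — together with a permutation $\pi\in S_r$, via $m_k=b_{\pi(k)}$; hence each $\vec n$ has exactly $r!$ regular preimages $\vec m$, and for each of them $\mu'(\vec m)$ is obtained from $\mu(\vec n)$ by applying the element of $W$ that permutes the two-element blocks $\{2j-1,2j\}$ according to $\pi$ and fixes $\varepsilon_n$. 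As every such block permutation is a product of commuting double transpositions, it is even, so $\mathcal F_W(e^{\mu'(\vec m)})=\mathcal F_W(e^{\mu(\vec n)})$ in all cases; summing over the $r!$ choices of $\pi$ gives
\[
\frac{1}{r!}\,\mathcal F_W\!\left(\frac{e^{\rho^{\Uparrow}}}{\prod_{k}(1-e^{-\beta_k})}\right)=\sum_{\vec n}\mathcal F_W\bigl(e^{\mu(\vec n)}\bigr)=\mathcal F_W\!\left(\frac{e^{\rho}}{\prod_{k}(1-e^{-\gamma_k})}\right).
\]
By Theorem~\ref{th1} the left-hand side equals $e^{\rho}R$, which is the asserted identity.

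I expect the main obstacle to be the combinatorial bookkeeping in these two geometric expansions: pinning down precisely which tuples produce regular weights and matching the regular $\vec m$'s with the tuples $(b_k)$ on the other side, and then — the delicate step — verifying that the block-permutation intertwiner between $\mu'(\vec m)$ and $\mu(\vec n)$ is always an \emph{even} element of $W$, so that the signs on the two sides agree and no spurious factor appears. A subsidiary point is to set up the completion carefully enough that the term-by-term application of $\mathcal F_W$ and the rearrangement of the resulting locally finite double sum are justified.
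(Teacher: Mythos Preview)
Your argument is correct and follows essentially the same route as the paper: expand both alternating sums as geometric series, discard non-regular terms, and match the surviving terms via the bijection $b_k=(n-2k)+n_k$, using that the block permutations $(2i{-}1,2j{-}1)(2i,2j)$ are even so the $r!$ reorderings of a regular $\vec m$ all contribute the same $\mathcal F_W$-term. The only cosmetic difference is that the paper factors the computation through the intermediate quantity $X_{\rho^\Uparrow}$ (first showing $X_\rho=X_{\rho^\Uparrow}$, then relating $X_{\rho^\Uparrow}$ to the product form of Theorem~\ref{th1}), whereas you compare the two outer expressions directly; the combinatorics is identical.
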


\begin{proof}
For $\mu\in \mathfrak{h}^*$, write 
$$
X_\mu:=\mathcal{F}_W
\left( \frac{e^{\mu}}{
(1-e^{-\beta_1})(1-e^{-\beta_1-\beta_2})\cdots 
(1-e^{-\beta_1-\ldots-\beta_r})
} \right).
$$
We show that 
\begin{equation} 
X_{\rho} = \frac{1}{r!} 
\mathcal{F}_W 
\left(  
\frac{e^{\rho^{\Uparrow}}}{ 
(1-e^{-\beta_1})(1-e^{- \beta_2})\cdots (1-e^{-\beta_r})  
}
\right),
\label{from product to sums of betas form}
\end{equation}
where
$ 
\rho^{\Uparrow}=\varepsilon_1+\varepsilon_3+\ldots+\varepsilon_{2r-1}.
   $

 We first claim that 
$
 \label{F_rho=F_rho^uparrow}
X_\rho=X_{\rho^\Uparrow}.
$
 We expand $X_\rho$ and $X_{\rho^\Uparrow}$ 
 as a geometric series in the domain $\left|e^\alpha \right|<1$ 
 for $\alpha>0$. 
 Note that $w\beta_i>0$ 
 for every $w\in W$. Thus,
  \begin{equation*}
\begin{split}
    X_\rho &=\sum_{m_1\ge m_2\ge \ldots\ge m_r\ge 0}\mathcal{F}_W \left(e^{\rho-m_1\beta_1-\ldots-m_r\beta_r} \right)\\
    & =\sum_{m_1\ge m_2\ge \ldots\ge m_{r}\ge 0}
    \mathcal{F}_W \left(e^{(n-1+m_1)\varepsilon_1+(n-2+m_1)\varepsilon_2+(n-3+m_2)\varepsilon_3+(n-4+m_3)\varepsilon_4+\ldots+(1+m_r)\varepsilon_{r-1}+m_r\varepsilon_r} \right)\\
\end{split}
\end{equation*}
 and 
    \begin{equation*}
    \begin{split}
    X_{\rho^\Uparrow} &= \sum_{m_1\ge m_2\ge \ldots\ge m_{r}\ge 0}\mathcal{F}_W \left(e^{\rho^\Uparrow-m_1\beta_1-\ldots - 
    {m_{r}\beta_{r}} } \right)
    \\  & = \sum_{m_1\ge m_2\ge \ldots\ge m_r\ge 0}\mathcal{F}_W \left(e^{ (1+m_1)\varepsilon_1+m_1\varepsilon_2+(1+m_2)\varepsilon_3+ m_2\varepsilon_4+\ldots+(1+m_r)\varepsilon_{r-1}+m_r\varepsilon_r } \right).
    \\    
    \end{split}
    \end{equation*}

Note that $$\mathcal{F}_W \left(e^{ (1+m_1)\varepsilon_1+m_1\varepsilon_2+(1+m_2)\varepsilon_3+ m_2\varepsilon_4+\ldots+(1+m_r)\varepsilon_{r-1}+m_r\varepsilon_r }\right) $$ is nonzero only if $m_1,\ldots,m_r,m_r+1,\ldots,m_r+1$  are distinct. 
Since $m_1\ge m_2\ge\ldots\ge m_r\ge 0$, we get that $m_r\ge0$, $m_{r-1}\ge 2$, $m_{r-2}\ge 4, \ldots, m_1\ge 2r-1$.
Thus all the nonzero terms in 
${X_\rho}$ 
and 
${X_{\rho^\Uparrow}}$ 
are the same and we get the equality.

Now 
 \begin{equation*}
\begin{split}
    X_{\rho^\Uparrow}  & = 
    \sum_{
    {m_1> m_2> \ldots> m_r> 0}
    }\mathcal{F}_W\left(e^{ (1+m_1)\varepsilon_1+m_1\varepsilon_2+(1+m_2)\varepsilon_3+ m_2\varepsilon_4+\ldots+(1+m_r)\varepsilon_{r-1}+m_r\varepsilon_r } \right)
\\    & = \frac{1}{r!}\sum_{m_1\ne m_2\ne \ldots\ne m_r\ne 0}\mathcal{F}_W \left(e^{ (1+m_1)\varepsilon_1+m_1\varepsilon_2+(1+m_2)\varepsilon_3+ m_2\varepsilon_4+\ldots+(1+m_r)\varepsilon_{r-1}+m_r\varepsilon_r } \right)
\\        & =  \frac{1}{r!}\sum_{m_1, m_2, \ldots, m_r\ge 0}\mathcal{F}_W \left(e^{ (1+m_1)\varepsilon_1+m_1\varepsilon_2+(1+m_2)\varepsilon_3+ m_2\varepsilon_4+\ldots+(1+m_l)\varepsilon_{r-1}+m_r\varepsilon_r } \right)
\\     & =  \frac{1}{r!}\mathcal{F}_W \left( \frac{e^{\rho^\Uparrow}}{
(1-e^{-\beta_1})(1-e^{-\beta_2})\cdots 
(1-e^{-\beta_r})
} \right),
\\    
\end{split}
\end{equation*}
and the claim follows from  \eqref{from product to sums of betas form} and Theorem~\ref{th1}.
\end{proof}

  %%%%%%%%%%%%%%%%%%%%%%%%%%%%%%%%%%%%%%
  %%%%%%%%%%%%%%%%%%%%%%%%%%%%%%%%%%%%%%

\section{Thick denominator identity for $\mathfrak{p}(n)$}\label{sec thick}

 In this section, we present denominator identities for the Borel subalgebra $\mathfrak{b}^{\thick}$ of $\fp(n)$, namely when 
    $\Delta_{\bar{1}}^{+}=\Delta(\mathfrak{g}_{1})$. 
 In this case, $\Delta_{\bar{1}}^{+}=\Delta_{1}$, so 
 $\rho_{\bar{1}}= \frac{n}{2}\sum_{i=1}^{n} \varepsilon_i $ and
    $
    \rho=\rho_n %= -\varepsilon_{1}-2\varepsilon_{2}-\ldots-n\varepsilon_{n}$
    = -\sum_{i=1}^{n} i\varepsilon_i 
    $. Let $R=\frac{R_0}{R_{1}}$, where $R_{1}=\prod_{1\le i\leq j\le n}\left(1-e^{-(\varepsilon_i+\varepsilon_j)}\right)$.
        
        We have the following theorem. 
\begin{theorem} 
\label{thm: thick}
Let $\mathfrak g=\fp(n)$ and $\Delta_{\bar{1}}^{+}=\Delta(\mathfrak{g}_{1})$. 
	Then  
\[ 
e^{\rho} R
= \sum_{w\in W} (\sgn w) w
 \left(  
\frac{e^{\rho}}{(1-e^{-\beta_1})(1-e^{-\beta_1-\beta_2})\cdots (1-e^{-\beta_1-\ldots -\beta_{n}})} 
\right),
\] 
where 
$\beta_1 = 2\varepsilon_n,\ 
\beta_2 = 2\varepsilon_{n-1},   
\ldots, 
\beta_{n} =2\varepsilon_1$.

\end{theorem}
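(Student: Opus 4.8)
The plan is to reduce the claim — as in the proof of Theorem~\ref{th1} — to an identity inside $\mathcal F_W$, and then prove that identity by induction on $n$, ``peeling off'' the variable $\varepsilon_1$. First I would rewrite the left‑hand side: the Weyl denominator identity for $\mathfrak g_{\bar 0}\cong\mathfrak{gl}(n)$ gives $e^{\rho_{\bar 0}}R_0=\mathcal F_W(e^{\rho_{\bar 0}})$, and multiplying by the $W$‑invariant element $e^{-\rho_{\bar 1}}$ (a power of $\prod_ie^{-\varepsilon_i}$) yields $e^{\rho}R_0=\mathcal F_W(e^{\rho})$, so $e^{\rho}R=\mathcal F_W(e^{\rho})/R_1$. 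Writing $\gamma_k:=\beta_1+\dots+\beta_k=2(\varepsilon_{n+1-k}+\dots+\varepsilon_n)$ and $\sigma_n:=\varepsilon_1+\dots+\varepsilon_n$, and using that $R_1$ is $W$‑invariant, the theorem becomes $\mathcal F_W(e^{\rho})=\mathcal F_W(\Phi_n)$, where $\Phi_n:=e^{\rho}R_1/\prod_{k=1}^n(1-e^{-\gamma_k})$.

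Next, separate off the factors of $R_1$ involving $\varepsilon_1$: since $R_1=g\,R_1^{(n-1)}$ with $g:=(1-e^{-2\varepsilon_1})\prod_{j=2}^n(1-e^{-\varepsilon_1-\varepsilon_j})$ and $R_1^{(n-1)}:=\prod_{2\le i\le j\le n}(1-e^{-\varepsilon_i-\varepsilon_j})$, and since $\gamma_1,\dots,\gamma_{n-1}$ involve only $\varepsilon_2,\dots,\varepsilon_n$ while $\gamma_n=2\sigma_n$, one gets (using $\rho_n=\rho'_{n-1}-\sigma_n$, where $\rho'_{n-1}=-(\varepsilon_2+2\varepsilon_3+\dots+(n-1)\varepsilon_n)$ is the thick $\rho$ of $\mathfrak p(n-1)$ in the variables $\varepsilon_2,\dots,\varepsilon_n$) the factorization $\Phi_n=\dfrac{g}{e^{\sigma_n}-e^{-\sigma_n}}\,\Phi_{n-1}$, where $\Phi_{n-1}$ is the analogous object for $\mathfrak p(n-1)$. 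As $(e^{\sigma_n}-e^{-\sigma_n})^{-1}$ is $W$‑invariant, $\mathcal F_{W_n}(\Phi_n)=(e^{\sigma_n}-e^{-\sigma_n})^{-1}\mathcal F_{W_n}(g\Phi_{n-1})$, where $W_n=S_n$. Now induct on $n$ (the case $n=1$ being $\Phi_1=e^{\rho}$). Since $g$ is symmetric in $\varepsilon_2,\dots,\varepsilon_n$, i.e.\ $W_{n-1}$‑invariant, the coset decomposition $W_n=\bigsqcup_l\tau_lW_{n-1}$ — which gives $\mathcal F_{W_n}(\,\cdot\,)=\sum_l(\sgn\tau_l)\tau_l\bigl(\mathcal F_{W_{n-1}}(\,\cdot\,)\bigr)$ — together with the induction hypothesis $\mathcal F_{W_{n-1}}(\Phi_{n-1})=\mathcal F_{W_{n-1}}(e^{\rho'_{n-1}})$ yields $\mathcal F_{W_n}(g\Phi_{n-1})=\mathcal F_{W_n}(g\,e^{\rho'_{n-1}})$. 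Because $\rho_n+\sigma_n=\rho'_{n-1}$ and $\rho_n-\sigma_n=\rho'_{n-1}-2\sigma_n$, the target $\mathcal F_{W_n}(\Phi_n)=\mathcal F_{W_n}(e^{\rho_n})$ is then equivalent to the finite identity
\[
\mathcal F_{W_n}\bigl((g-1+e^{-2\sigma_n})\,e^{\rho'_{n-1}}\bigr)=0 .
\]

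Proving this last identity is the step I expect to be the main obstacle: one must show that the Laurent polynomial $(g-1+e^{-2\sigma_n})e^{\rho'_{n-1}}$ lies in $\ker\mathcal F_{W_n}$, and I would do this by a sign‑reversing involution on its monomials, pairing $e^{\mu}$ with $e^{s\mu}$ for a suitable reflection $s\in W_n$ so that the coefficients cancel within each $W_n$‑orbit (for $n=2$ one checks directly that $(g-1+e^{-2\sigma_2})e^{-\varepsilon_2}=-e^{-\varepsilon_1-2\varepsilon_2}-e^{-2\varepsilon_1-\varepsilon_2}+e^{-3\varepsilon_1-2\varepsilon_2}+e^{-2\varepsilon_1-3\varepsilon_2}$, whose terms cancel in the pairs $\{-\varepsilon_1-2\varepsilon_2,\ -2\varepsilon_1-\varepsilon_2\}$ and $\{-3\varepsilon_1-2\varepsilon_2,\ -2\varepsilon_1-3\varepsilon_2\}$ under $\mathcal F_{W_2}$). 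An alternative route bypasses the induction: from the factorization $R_{1,n}=(-1)^{\binom n2}\prod_i(1-e^{-2\varepsilon_i})\,e^{-(n-1)\sigma_n}\,R_{-1,n}$ and the relation $\rho^{\thick}+(n-1)\sigma_n=\rho^{\thin}-\sigma_n$ one obtains the rational identity $e^{\rho}R|_{\mathfrak b^{\thick}}=(-1)^{\binom n2}\,\dfrac{e^{-\sigma_n}}{\prod_i(1-e^{-2\varepsilon_i})}\,e^{\rho}R|_{\mathfrak b^{\thin}}$; since the scalar factor is symmetric in the $\varepsilon_i$, after inserting Theorem~\ref{thm thin sum of betas} and pulling the factor inside $\mathcal F_W$ one is reduced to converting the thin ``$\beta_1+\dots+\beta_k$'' denominator into the thick one, a manipulation of symmetrized geometric series in the spirit of the proof of Theorem~\ref{thm thin sum of betas}. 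Either way, the finite combinatorial cancellation is where the real work lies.
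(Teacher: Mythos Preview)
Your inductive reduction is exactly the paper's: pulling the $W$-invariant factor $(e^{\sigma_n}-e^{-\sigma_n})^{-1}$ out of $\mathcal F_{W_n}$, splitting $\mathcal F_{W_n}=\mathcal F_{W_n/W_{n-1}}\circ\mathcal F_{W_{n-1}}$, and applying the induction hypothesis to $\Phi_{n-1}$ lands you at precisely the finite identity the paper calls~\eqref{2 orbits}, namely $\mathcal F_{S_n}\bigl(e^{\rho_n}(e^{\sigma_n}-e^{-\sigma_n})\bigr)=\mathcal F_{S_n}\bigl(e^{\rho'_{n-1}}\prod_{i=1}^n(1-e^{-\varepsilon_1-\varepsilon_i})\bigr)$, which is your $\mathcal F_{W_n}\bigl((g-1+e^{-2\sigma_n})e^{\rho'_{n-1}}\bigr)=0$ rewritten.

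For the step you flag as the main obstacle, the paper does not construct a global sign-reversing involution but gives a sharper regular-orbit count. Expanding $g=\prod_{i=1}^n(1-e^{-\varepsilon_1-\varepsilon_i})$ over subsets $A\subset\{\varepsilon_1+\varepsilon_i\}$, each term contributes $(-1)^{|A|}e^{\lambda_A}$ with $\lambda_A=\rho'_{n-1}-\sum_{\alpha\in A}\alpha$. Writing $\lambda_A=\sum b_i\varepsilon_i$, one has $b_k\in\{k-1,k\}$ for $k\ge 2$, so regularity forces the $b_k$'s to be a permutation of $\{1,\dots,n\}$; tracking the first index with $b_k=k$ shows that the only regular $\lambda_A$ are $A=\varnothing$ (giving $\rho_n+\sigma_n$), $A$ full (giving the cyclic permutation $(12\cdots n)$ applied to $\rho_n-\sigma_n$, with the correct sign $(-1)^{n}\cdot(-1)^{n-1}=-1$), and, when $n$ is even, exactly two more subsets $A=\{\varepsilon_1+\varepsilon_k,\dots,\varepsilon_1+\varepsilon_n\}$ and $A'=A\cup\{2\varepsilon_1\}$ (with $k=\tfrac n2+1$, resp.\ $\tfrac n2+2$) whose weights differ by the transposition $(1\ \tfrac n2)$ and hence cancel under $\mathcal F_{S_n}$. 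That is the explicit involution in your $n=2$ check, and it finishes the proof; your alternative route via Theorem~\ref{thm thin sum of betas} is not used.
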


 	\begin{proof}
We prove the identity by induction on $n$. For $n=1$, the Weyl group $W$ consists of the identity element and the only root is $\beta_1=2\varepsilon_1$. Thus the identity is evident. Suppose that the identity holds for $\mathfrak{p}(n-1)$.

	Fix the obvious root embedding of 
 	$\mathfrak{p}(n-1)$ 
 	in 
 	$\mathfrak{p}(n)$ for which
 	$\mathfrak{h}_{\mathfrak{p}(n-1)}^{*}=\text{span}\left\{ \varepsilon_{2}, 
 		\ldots,  \varepsilon_{n}\right\} $.
	 For this embedding, $\rho_{n-1}= -\sum_{i=1}^{n-1} i\varepsilon_{i+1}$ and
 	$\rho_{n}=\rho_{n-1}-\varepsilon_{1}-\ldots-\varepsilon_{n}$.	Then
	\begin{align*}
	\text{RHS} & =\mathcal{F}_{W}\left(\frac{e^{\rho_{n}}}{\left(1-e^{-2\varepsilon_{n}}\right)
	\left(1-e^{-2\varepsilon_{n-1}-2\varepsilon_{n}}\right)\cdots\left(
	1-e^{-2\varepsilon_{1}-\ldots-2\varepsilon_{n}}\right)}\right)\\
 	& =\frac{e^{-\varepsilon_{1}-\ldots-\varepsilon_{n}}}{1-e^{ 
 	-2\varepsilon_{1}-\ldots-2\varepsilon_{n}}}\mathcal{F}_{S_{n}}\left(
 	\frac{e^{\rho_{n-1}}}{\left(1-e^{-2\varepsilon_{n}}\right)\left(
 	1-e^{-2\varepsilon_{n-1}-2\varepsilon_{n}}\right)\cdots\left(
 	1-e^{-2\varepsilon_{2}-\ldots-2\varepsilon_{n}}\right)}\right)\\
 	& =\frac{e^{-\varepsilon_{1}-\ldots-\varepsilon_{n}}}{1-e^{
 	-2\varepsilon_{1}-\ldots-2\varepsilon_{n}}}\mathcal{F}_{S_{n}/S_{n-1}} 
 	\mathcal{F}_{S_{n-1}}\left(\frac{e^{\rho_{n-1}}}{\left(1-e^{-2\varepsilon_{n}}\right) 
 	\left(1-e^{-2\varepsilon_{n-1}-2\varepsilon_{n}}\right)\cdots\left(
 	1 - e^{-2\varepsilon_{2}-\ldots-2\varepsilon_{n}}\right)}\right)\\
 	& \stackrel{\text{induction}}{=}\frac{e^{-\varepsilon_{1}-\ldots-\varepsilon_{n}}}{
 	1 - e^{-2\varepsilon_{1}-\ldots-2\varepsilon_{n}}}\mathcal{F}_{S_{n}/S_{n-1}}
 	\left(e^{\rho_{n-1}}R_{n-1}\right)\\
 	& =\frac{1}{e^{\varepsilon_{1}+\ldots+\varepsilon_{n}}-e^{ 
 	-\varepsilon_{1}-\ldots-\varepsilon_{n}}}\mathcal{F}_{S_{n}/S_{n-1}}\left(
 	e^{\rho_{n-1}}R_{n-1}\right),
	\end{align*}
	where $S_{n}/S_{n-1}$ denotes a set of left coset representatives. 
	Thus the theorem is equivalent to 
	\begin{equation} \label{eqn ind}
	\left(e^{\varepsilon_{1}+\ldots+\varepsilon_{n}}-e^{
	-\varepsilon_{1}-\ldots-\varepsilon_{n}}\right)e^{\rho_{n}}R_{n} 
	= \mathcal{F}_{S_{n}/S_{n-1}}\left(e^{\rho_{n-1}}R_{n-1}\right).
	\end{equation} 

To translate this identity to be an identity of finite expressions (and not rational functions), we multiply both sides of \eqref{eqn ind} by	$R_{1,n}=R_{1,n-1} 
	\prod_{i=1}^{n}\left(1-e^{-\varepsilon_{1}-\varepsilon_{i}}\right) $, 
 	which is 
  	$W$-invariant, and we get  
	\[
	\left(e^{\varepsilon_{1}+\ldots+\varepsilon_{n}}-e^{
	-\varepsilon_{1}-\ldots-\varepsilon_{n}}\right)e^{\rho_{n}}R_{0,n} 
	= \mathcal{F}_{S_{n}/S_{n-1}}\left(e^{\rho_{n-1}}R_{0,n-1}
	\prod_{i=1}^{n}\left(1-e^{-\varepsilon_{1}-\varepsilon_{i}}\right)\right).
	\]  
	By the denominator identity of $\mathfrak{sl}(n)$ and the fact that $\rho_{n,\bar 0}-\rho_n$ is $S_n$-invariant,  we have
	\[
	e^{\rho_{n}}R_{0,n}=\mathcal{F}_{S_{n}}\left(e^{\rho_{n}}\right)
	\quad 
	\text{ and }
	\quad 
	e^{\rho_{n-1}}R_{0,n}=\mathcal{F}_{S_{n-1}}\left(e^{\rho_{n-1}}\right).
	\] 
	So the identity becomes
	\begin{equation}\label{eqn nex}
	\left(e^{\varepsilon_{1}+\ldots+\varepsilon_{n}}-e^{
	-\varepsilon_{1}-\ldots-\varepsilon_{n}}\right)\mathcal{F}_{S_{n}}\left(e^{\rho_{n}}\right)
	= \mathcal{F}_{S_{n}/S_{n-1}}\left(\mathcal{F}_{S_{n-1}}\left(e^{\rho_{n-1}}\right) 
	\prod_{i=1}^{n}\left(1-e^{-\varepsilon_{1}-\varepsilon_{i}}\right)\right).
	\end{equation}
	Since the term $\prod_{i=1}^{n}\left(1-e^{-\varepsilon_{1}-\varepsilon_{i}}\right)	$ is $S_{n-1}$-invariant, the RHS of \eqref{eqn nex} equals
\begin{align*}
\mathcal{F}_{S_{n}/S_{n-1}}
&\left(\mathcal{F}_{S_{n-1}}\left(e^{\rho_{n-1}}\right)
\prod_{i=1}^{n}\left(1-e^{-\varepsilon_{1}-\varepsilon_{i}}\right)\right)  \\ 
&=\mathcal{F}_{S_{n}/S_{n-1}}\left(\mathcal{F}_{S_{n-1}}\left(e^{\rho_{n-1}}\prod_{i=1}^{n}\left(1-e^{-\varepsilon_{1}-\varepsilon_{i}}\right)\right)\right)\\
 & =\mathcal{F}_{S_{n}}\left(e^{\rho_{n-1}}\prod_{i=1}^{n}\left(1-e^{-\varepsilon_{1}-\varepsilon_{i}}\right)\right).
\end{align*}
	Hence, as $\left(e^{\varepsilon_{1}+\ldots+\varepsilon_{n}}-e^{
	-\varepsilon_{1}-\ldots-\varepsilon_{n}}\right)$ is $S_n$-invariant, \eqref{eqn nex} becomes
\begin{equation}\label{2 orbits}
	\mathcal{F}_{S_{n}}\left(e^{\rho_{n}}\left(e^{\varepsilon_{1}+\ldots+\varepsilon_{n}}-e^{
	-\varepsilon_{1}-\ldots-\varepsilon_{n}}\right)\right)
	= \mathcal{F}_{ S_{n}}\left(e^{\rho_{n-1}} 
	\prod_{i=1}^{n}\left(1-e^{-\varepsilon_{1}-\varepsilon_{i}}\right)\right).
	\end{equation}

Finally, we are left to prove an equality between two $S_n$-anti-invariant finite expressions, and
by Lemma~\ref{orbit}, we are reduced to studying regular elements.
The LHS of \eqref{2 orbits} has  two $S_n$-orbits, which correspond to
  	$\mathcal{F}_{S_n}\left( e^{\rho_{n}+\varepsilon_{1}+\ldots+\varepsilon_{n}}\right)$ 
 	and 
 	$\mathcal{F}_{S_n}\left(-e^{\rho_{n}-\varepsilon_{1}-\ldots-\varepsilon_{n}}\right)$.

  By expanding the inside of the RHS of 
  \eqref{2 orbits}  
  we obtain
 \begin{equation}\label{eqn orb}
 e^{\rho_{n-1}} 
	\prod_{i=1}^{n} \left(1-e^{-\varepsilon_{1}-\varepsilon_{i}}\right) 
	=\sum_{A\subset\{\varepsilon_1+\varepsilon_i\mid i=1,\ldots,n  \}} a_{\lambda_A} e^{\lambda_A}
,
	\end{equation}
where $\lambda_{ A}=\rho_{n-1}-\sum_{\alpha\in A}\alpha$ and $a_{\lambda_A}=(-1)^{|A|}$.

 	 	If $A$ is empty, then $\lambda_{ A}=\rho_{n-1}=\rho_n+\varepsilon_1+\ldots+\varepsilon_n$ is regular and $a_{\lambda_{ A}}=1$. If $A$ is the entire set, then   
 	$\lambda_{ A}=-2\varepsilon_2-3\varepsilon_3-\ldots-n\varepsilon_n-(n+1)\varepsilon_1$ is regular and $a_{\lambda_{ A}}=(-1)^n$. In the latter case, $\lambda_{ A}=y(\rho_{n}-\varepsilon_{1}-\ldots-\varepsilon_{n})$ where $y$ is the permutation $(12\ldots n)$ and  $\sgn y=(-1)^{n-1}$. 
 	We claim that if $n$ is odd then these are the only two regular elements in the RHS of \eqref{2 orbits}, while if $n$ is even then we have two more regular elements that cancel each other in the sum. This will imply that \eqref{eqn orb} holds as required.
 	
 	Now suppose $A\subsetneq \{\varepsilon_1+\varepsilon_i\mid i=1,\ldots,n  \}$ and $A$ is nonempty. Write
 	$$\lambda_A=\sum_{i=1}^{n} b_i\varepsilon_i.$$
 	 Then the coefficients $b_1,\ldots,b_n$ are contained in $\{1,2,\ldots,n\}$, and they are distinct since $\lambda_A$ is assumed to be regular. Moreover, for $k\geq 2$, either $b_k=k-1$ or $b_k=k$.
 	 Let $k\geq 2$ be the smallest integer for which $b_k=k$. Then $b_i=i$ for all $i\geq k$, since  $b_1,\ldots,b_n$ are distinct. It follows that $k>2$ since we assume that $A$ is not the entire set and that $\lambda_A$ is regular. We have two possibilities:  $A=\{\varepsilon_1+\varepsilon_{k},\ldots,\varepsilon_1+\varepsilon_{n}  \}$ and $A'=\{\varepsilon_1+\varepsilon_{k},\ldots,\varepsilon_1+\varepsilon_{n},2\varepsilon_1  \}$. 
 	 Regularity  implies that $k=\frac{n}{2}+1$ in the former case, while $k=\frac{n}{2}+2$ in the latter case. Clearly, this implies that $n$ is even. Finally, since $\lambda_A$ differs from $\lambda_{A'}$ only by the transposition $(1 \ \frac{n}{2})$, we conclude that these two additional regular elements cancel each other in the sum.
 		\end{proof} 

	%Recall that $\rho_{n-1}=-\varepsilon_2-2\varepsilon_3-\ldots-(n-1)\varepsilon_n$. 

%%%%%%%%%%%%%%%%%%%%%%%%%%%%%%%%%%%%
%%%%%%%%%%%%%%%%%%%%%%%%%%%%%%%%%%%%

\section{Some remarks}\label{sec remarks}

\subsection{The character version of the denominator identity}\label{sec char}
The denominator identities written in this paper are given in terms of supercharacters. One can translate them into characters. In this case $R_0$ stays the same,  
$R_1=\prod_{\alpha\in\Delta(\mathfrak{g}_1)}\left(1+e^{-\alpha}\right)$, the identity in Theorem~\ref{th1} takes the form

\begin{equation*}
	e^{\rho}R=\frac{1}{r !}
	     \sum_{w\in W} (\sgn w) w\left(\frac{e^{\rho^{\Uparrow}} }{
	\prod_{\beta\in S} \left(1+ e^{-\beta} \right)} 
	   \right),
	\end{equation*}
	and the identities in Theorem~\ref{thm thin sum of betas} and Theorem~\ref{thm: thick}  take the form
\[
e^\rho R=\mathcal{F}_W 
\left( 
\frac{e^{\rho}}{(1+e^{-\beta_1})(1-e^{-\beta_1-\beta_2})\cdots (1+(-1)^r e^{-\beta_1-\ldots -\beta_r})} 
\right)
\]
for the appropriate choices of $\beta_1,\ldots,\beta_r$.

\subsection{Representation-theoretical meaning of the denominator identity}  \label{complex}

It would be interesting to find a complex of thin Kac modules (or thick Kac modules) whose Euler characteristic yields the denominator identity.

For a given dominant integral weight $\lambda$, we let $V(\lambda)$ denote the simple $\mathfrak{g}_0$-module with highest weight $\lambda$ 
 with respect to the fixed Borel $\mathfrak{b}_0$ of $\mathfrak{g}_0$. The thin Kac module corresponding to $\lambda$ is defined to be 
$
\nabla(\lambda) := \Coind_{\mathfrak{g}_0\oplus \mathfrak{g}_1}^{\mathfrak{g}} V(\lambda),
$ 
where we take the parity of  the superspace $V(\lambda)$ to be purely even or odd according the sign convention used in \cite[Section 2.3]{IRS19}, and denote this parity by  $\sgn \lambda$.
Then the supercharacter of $\nabla(\lambda)$ is $$\sch\nabla(\lambda)=(\sgn \lambda)\frac{R_{-1}}{e^{\rho}R_0}\cdot\mathcal F_W\left(e^{\lambda+\rho}\right)$$  (see \cite[Lemma 2.4.1]{IRS19}).
After substitution, the formula in Theorem~\ref{thm thin sum of betas} takes the form 
$$\sch L(0)=\sum_{ i_1\ge\ldots\ge i_r\ge 0}(-1)^{i_1+\ldots+i_r}\sch\nabla\left( -i_1\beta_1-\ldots-i_r\beta_r \right).$$

For $r=2$, we conjecture that the complex has the following form.
\xymatrixrowsep{.8cm}
\xymatrixcolsep{.8cm}
\[  
\xymatrix@-1.9pc{
\ddots & & & & & & & & & & & & & & &\\ 
\ldots &  \nabla(-n\beta_1 -n\beta_2)  & & & & & & & & & & & & & &\\  
& & & & & & & & & & & & & & & \\ 
& & & & & & & & & & & & & & & \\ 
\ldots & \vdots \ar[uuu]& & \ddots & & & & & & & & & & & &\\ 
& & & & & & & & & & & & & & & \\ 
& & & & & & & & & & & & & & & \\ 
\ldots &  \nabla(-n\beta_1 -2 \beta_2)\ar[uuu]   & &  
 \ldots\ar[ll]  & & & \nabla(-2\beta_1 -2 \beta_2)\ar[lll]  & 
& & & & & & & & \\ 
& & & & & & & & & & & & & & & \\ 
& & & & & & & & & & & & & & & \\ 
\ldots &  \nabla(-n\beta_1 -\beta_2)\ar[uuu]   & & 
 \ldots \ar[ll] & & & 
\nabla(-2\beta_1 -\beta_2)\ar[uuu]\ar[lll] & 
& {\nabla}(-\beta_1-\beta_2)\ar[ll]
& &  &  & & & & \\ 
& & & & & & & & & & & & & & & \\ 
& & & & & & & & & & & & & & & \\ 
\ldots &  \nabla(-n\beta_1)\ar[uuu]   & & \ldots \ar[ll]
& & & {\nabla}(-2 \beta_1)\ar[uuu]\ar[lll]  & 
& {\nabla}(-\beta_1)\ar[ll]\ar[uuu]&  {\nabla}(0) \ar[l]& & &   L(0)\ar[lll] & & & 0.\ar[lll] \\ 
}
\]

\bibliography{branching}   

\newcommand{\etalchar}[1]{$^{#1}$}
\providecommand{\bysame}{\leavevmode\hbox to3em{\hrulefill}\thinspace}
\providecommand{\MR}{\relax\ifhmode\unskip\space\fi MR }
% \MRhref is called by the amsart/book/proc definition of \MR.
\providecommand{\MRhref}[2]{%
  \href{http://www.ams.org/mathscinet-getitem?mr=#1}{#2}
}
\providecommand{\href}[2]{#2}
\begin{thebibliography}{GKMFP12}

\bibitem[BDEA{\etalchar{+}}18]{BDEHHILNSS}
Martina Balagovic, Zajj Daugherty, Inna Entova-Aizenbud, Iva Halacheva, Johanna
  Hennig, Mee~Seong Im, Gail Letzter, Emily Norton, Vera Serganova, and
  Catharina Stroppel, \emph{Translation functors and decomposition numbers for
  the periplectic {L}ie superalgebra $\mathfrak{p}(n)$}, arXiv preprint
  arXiv:1610.08470, to appear in Math. Res. Lett. (2018).

\bibitem[CC]{CC}
C.-W. Chen and K.~Coulembier, \emph{The primitive spectrum and category
  $\mathcal{O}$ for the periplectic {L}ie superalgebra}, arXiv:1805.08885.

\bibitem[Che15]{Che15}
C.-W. Chen, \emph{Finite-dimensional representations of periplectic {L}ie
  superalgebras}, J. Algebra \textbf{443} (2015), 99--125.

\bibitem[EAS18]{EAS18}
Inna Entova-Aizenbud and Vera Serganova, \emph{Deligne categories and the
  periplectic {L}ie superalgebra}, arXiv preprint arXiv:1807.09478 (2018).

\bibitem[EAS19]{EAS19}
\bysame, \emph{Kac-{W}akimoto conjecture for the periplectic {L}ie
  superalgebra}, arXiv preprint arXiv:1905.04712 (2019).

\bibitem[GK07]{MR2323540}
Maria Gorelik and Victor Kac, \emph{On simplicity of vacuum modules}, Adv.
  Math. \textbf{211} (2007), no.~2, 621--677.

\bibitem[GKMFP12]{MR2902248}
Maria Gorelik, Victor~G. Kac, Pierluigi M\"{o}seneder~Frajria, and Paolo Papi,
  \emph{Denominator identities for finite-dimensional {L}ie superalgebras and
  {H}owe duality for compact dual pairs}, Jpn. J. Math. \textbf{7} (2012),
  no.~1, 41--134.

\bibitem[Gor01]{G01}
Maria Gorelik, \emph{The centre of simple {P}-type {L}ie superalgebra}, J.
  Algebra \textbf{246} (2001), 414--428.

\bibitem[Gor10]{Gorelik-Maria-2010}
\bysame, \emph{Weyl denominator identity for the affine {L}ie superalgebra
  $\mathfrak{gl}(2|2)\hat{}$}, arXiv preprint arXiv:1007.4305 (2010).

\bibitem[Gor11]{MR2796063}
\bysame, \emph{Weyl denominator identity for affine {L}ie superalgebras with
  non-zero dual {C}oxeter number}, J. Algebra \textbf{337} (2011), 50--62.

\bibitem[Gor12]{MR2866851}
\bysame, \emph{Weyl denominator identity for finite-dimensional {L}ie
  superalgebras}, Highlights in {L}ie algebraic methods, Progr. Math., vol.
  295, Birkh\"{a}user/Springer, New York, 2012, pp.~167--188.

\bibitem[GR12]{gorelik2012denominator}
Maria Gorelik and Shifra Reif, \emph{A denominator identity for affine {L}ie
  superalgebras with zero dual {C}oxeter number}, Algebra \& Number Theory
  \textbf{6} (2012), no.~5, 1043--1059.

\bibitem[HR09]{MR2512631s}
Crystal Hoyt and Shifra Reif, \emph{Simplicity of vacuum modules over affine
  {L}ie superalgebras}, J. Algebra \textbf{321} (2009), no.~10, 2861--2874.

\bibitem[IRS19]{IRS19}
Mee~Seong Im, Shifra Reif, and Vera Serganova, \emph{Grothendieck rings of
  periplectic {L}ie superalgebras}, arXiv preprint arXiv:1906.01948 (2019).

\bibitem[Kac74]{Kac74}
V.~G. Kac, \emph{Infinite dimensional {L}ie algebras}, Funkt. Analis i ego
  Prilozh. \textbf{8} (1974), no.~1.

\bibitem[Kac77]{MR0486011}
\bysame, \emph{Lie superalgebras}, Advances in Math. \textbf{26} (1977), no.~1,
  8--96.

\bibitem[KMFP10]{papi2010denominator}
Victor Kac, Pierluigi M{\"o}seneder~Frajria, and Paolo Papi, \emph{Denominator
  formulas for {L}ie superalgebras}, Discrete Mathematics \& Theoretical
  Computer Science (2010), 839--850.

\bibitem[KW94]{MR1327543}
Victor~G. Kac and Minoru Wakimoto, \emph{Integrable highest weight modules over
  affine superalgebras and number theory}, Lie theory and geometry, Progr.
  Math., vol. 123, Birkh\"{a}user Boston, Boston, MA, 1994, pp.~415--456.

\bibitem[Mac72]{MR0357528}
I.~G. Macdonald, \emph{Affine root systems and {D}edekind's {$\eta
  $}-function}, Invent. Math. \textbf{15} (1972), 91--143.

\bibitem[Moo03]{Moo03}
D.~Moon, \emph{Tensor product representations of the {L}ie superalgebra
  $\mathfrak{p}(n)$ and their centralizerss}, Comm. Algebra \textbf{31} (2003),
  no.~5, 2095--2140.

\bibitem[Rei14]{MR3244923}
Shifra Reif, \emph{Denominator identity for twisted affine {L}ie
  superalgebras}, Int. Math. Res. Not. IMRN (2014), no.~15, 4146--4178.

\bibitem[Ser02]{Ser02}
Vera Serganova, \emph{On representations of the {L}ie superalgebra
  $\mathfrak{p}(n)$}, Journal of Algebra \textbf{258} (2002), 615--630.

\bibitem[Zag00]{MR1809285}
Don Zagier, \emph{A proof of the {K}ac-{W}akimoto affine denominator formula
  for the strange series}, Math. Res. Lett. \textbf{7} (2000), no.~5-6,
  597--604.

\end{thebibliography}
 
\end{document}